\newcommand{\e}{\varepsilon}
\newcommand{\vphi}{\varphi}
\newcommand{\st}{\ : \ }
\DeclareMathOperator{\tr}{Tr}
\DeclareMathOperator{\Id}{I}
\DeclareMathOperator{\rank}{rank}
\renewcommand{\leq}{\leqslant}
\renewcommand{\geq}{\geqslant}
\newcommand{\cD}{\mathrm{D}}
\newcommand{\mL}{\mathcal{L}}
\newcommand{\PSD}{\mathcal{PSD}}
\newcommand{\LP}{\cP(\mathcal{L}_m)}
\newcommand{\cP}{\bm{P}}
\newcommand{\R}{\mathbb{R}}
\newcommand{\C}{\mathbb{C}}
\newcommand{\cM}{\mathsf{M}}
\newcommand{\gU}{\mathsf{U}}
\newcommand{\gO}{\mathsf{O}}
\newcommand{\gSO}{\mathsf{SO}}
\newcommand{\gGL}{\mathsf{GL}}
\newcommand{\mC}{\mathcal{C}}
\newcommand{\scalar}[2]{\langle #1 , #2\rangle}
\newcommand{\braket}[2]{\langle #1 | #2\rangle}
\newcommand{\ketbra}[2]{| #1 \rangle \langle #2 |}
\newcommand{\bra}[1]{\langle #1 |}
\newcommand{\ket}[1]{| #1 \rangle}
\newcommand{\sa}{\textnormal{sa}}
\newcommand{\HS}{\textnormal{HS}}
\theoremstyle{plain}
\newtheorem{theorem}{Theorem}
\newtheorem{proposition}[theorem]{Proposition}
\newtheorem{lemma}[theorem]{Lemma}
\theoremstyle{definition}
\theoremstyle{remark}
\newtheorem{remark}[theorem]{Remark}
\title{Two proofs of St{\o}rmer's theorem} 
\author{Guillaume Aubrun\footnote{Institut
Camille Jordan, Universit\'{e} Claude Bernard Lyon 1, {aubrun@math.univ-lyon1.fr}. Supported in part by Agence Nationale de la Recherche (France) 
grants OSQPI (2011-BS01-008-02) and StoQ (2014-CE25-0003).} \ \ and \ Stanis{\l}aw J. Szarek\footnote{Case Western Reserve University {\sl and} Universit\'{e}  Pierre et Marie Curie, szarek@cwru.edu. Supported in part by grants from the National Science
Foundation (U.S.A.) and by the first ANR grant listed under GA.}}
\date{}
\begin{document}

\maketitle

\begin{abstract}
The structure of the set of positivity-preserving maps between matrix algebras is notoriously difficult to describe. The notable exceptions are the results by St{\o}rmer and Woronowicz from 1960s and 1970s settling the low dimensional cases. By duality, these results are equivalent to the Peres--Horodecki positive partial transpose criterion being able to unambiguously establish whether a state in a $2 \times 2$ or $2 \times 3$ quantum system is entangled or separable.  
However, even in these low dimensional cases, the existing arguments (known to the authors) were based on long and seemingly {\sl ad hoc} computations. We present a simple proof -- based on Brouwer's fixed point theorem -- 
for the  $2 \times 2$ case (St{\o}rmer's theorem). 
For completeness, we also include another argument 
(following the classical outline, but highly streamlined)  
based on a characterization of extreme self-maps of the Lorentz cone and on a link -- 
noticed by R. Hildebrand -- to  the $S$-lemma, 
a well-known fact from control theory and quadratic/semi-definite programming.  
\end{abstract}


\bigskip 
Denote by $\cM_n$ the space of  $n \times n$ complex matrices, 
by $\cM_n^{sa}$ the real-linear subspace of $n \times n$ {\em Hermitian} matrices, 
and by $\PSD = \PSD(\C^n)$ the cone of  {\em positive semi-definite} matrices. 
Further, let $\cP = \cP(\C^n)$ denote the cone of {\em positivity-preserving} maps  $\Phi : \cM_n^{sa} \to \cM_n^{sa}$,  
 i.e., linear maps verifying $\Phi(\PSD) \subset \PSD$. 
 
\smallskip 
In this note we will present a short proof of the following 1963 result of St{\o}rmer \cite{Stormer63}.
 
 \begin{theorem} [St{\o}rmer's theorem] \label{theorem:stormer}
A map $\Phi : \cM_2^{sa} \to \cM_2^{sa}$ belongs to $\Phi \in \cP(\C^2)$ if and only if 
\begin{equation} \label{eq:stormer}
\Phi(\rho)  = \sum_j A_j\rho A_j^\dagger+\sum_k B_k\rho^T B_k^\dagger
\end{equation}
 for some $\{A_j,B_k\}\subset\cM_2$, where $\rho^T$ denotes the transpose of $\rho$. Moreover, the total number of terms required in \eqref{eq:stormer} does not exceed $4$. 
\end{theorem}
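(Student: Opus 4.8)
The plan is to dispose of the trivial implication first and then reduce the substantive one to a statement about $4\times 4$ Hermitian matrices, where a fixed-point argument does the real work.

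\textbf{Easy direction.} Each summand $\rho \mapsto A_j \rho A_j^\dagger$ is completely positive, hence positive; and $\rho \mapsto B_k \rho^T B_k^\dagger$ is the composition of the transpose (which preserves $\PSD(\C^2)$, since $\rho$ and $\rho^T$ have the same spectrum) with a completely positive map, hence positive. A sum of positive maps is positive, so every map of the form \eqref{eq:stormer} lies in $\cP(\C^2)$.

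\textbf{Reformulation.} I would identify $\Phi$ with its Choi matrix $C_\Phi = \sum_{i,j} E_{ij}\otimes \Phi(E_{ij}) \in \cM_4^{sa}$. Then positivity of $\Phi$ is equivalent to $C_\Phi$ being \emph{block positive}, i.e.\ $\scalar{e\otimes f}{C_\Phi\,(e\otimes f)}\ge 0$ for all $e,f\in\C^2$, while the two families of summands in \eqref{eq:stormer} correspond, under the Choi correspondence, to writing $C_\Phi = P + Q^{\Gamma}$ with $P,Q\in\PSD$, where $\Gamma$ is the partial transpose on the first tensor factor. Thus the theorem is equivalent to the assertion that every block-positive $C\in\cM_4^{sa}$ decomposes as $P+Q^{\Gamma}$ with $P,Q\ge 0$. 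Dually (taking Hilbert--Schmidt polars of the relevant closed convex cones), this is the statement that every state on $\C^2\otimes\C^2$ with positive partial transpose is separable, the product vectors on the two sides being exchanged. I would then proceed on whichever formulation is more convenient; say, the implication PPT $\Rightarrow$ separable.

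\textbf{Main step (Brouwer).} Given $\rho\ge 0$ with $\rho^{\Gamma}\ge 0$, I would argue by induction on $\rank\rho + \rank\rho^{\Gamma}$. The heart is to produce a product vector $e\otimes f$ with $e\otimes f\in\range\rho$ and $\bar e\otimes f\in\range\rho^{\Gamma}$; then, taking the largest $\lambda\ge 0$ for which both $\rho-\lambda\,\ketbra{e\otimes f}{e\otimes f}$ and its partial transpose remain positive, the difference is again PPT while its combined rank strictly drops, and $\lambda\,\ketbra{e\otimes f}{e\otimes f}$ is a pure product term peeled off. Iterating reduces $\rho$ to $0$, exhibiting it as a sum of pure product states, i.e.\ as separable. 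To find the required product vector I would set up a continuous self-map of a compact convex body homeomorphic to a Euclidean ball --- concretely a Bloch ball, or a product of two Bloch balls --- whose fixed points are exactly the product vectors satisfying \emph{both} range conditions, and invoke Brouwer's fixed point theorem.

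\textbf{Main obstacle and the count.} The hard part will be precisely the construction of that self-map together with the verification that its fixed points meet the two range constraints \emph{simultaneously}: a single range condition is elementary, since the range of a low-rank state meets the Segre variety of product vectors for dimension reasons, but matching $\rho$ and $\rho^{\Gamma}$ at once is exactly where a topological fixed-point input seems unavoidable. Finally, the bound of four terms does not follow from a crude rank count; I would obtain it by organizing the extraction so as to control $\rank P + \rank Q$ in the equivalent matrix formulation (equivalently, by a Carath\'eodory-type argument applied to an extreme decomposition $C_\Phi=P+Q^{\Gamma}$), and I expect pinning down the sharp constant $4$ to require some additional bookkeeping.
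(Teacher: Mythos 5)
Your easy direction and the Choi/duality reformulation are correct, and the induction scheme (subtract the largest multiple of $\ketbra{e\otimes f}{e\otimes f}$ compatible with positivity of both $\rho$ and $\rho^{\Gamma}$, so that $\rank\rho+\rank\rho^{\Gamma}$ strictly drops) is a legitimate known strategy, essentially that of \cite{KCKL00}. But there is a genuine gap exactly where you locate it: the existence, for every nonzero PPT state $\rho$ on $\C^2\otimes\C^2$, of a product vector $e\otimes f\in\range\rho$ with $\bar e\otimes f\in\range\rho^{\Gamma}$ \emph{is} the entire mathematical content of the theorem, and you never construct the continuous self-map whose fixed points would be such vectors, nor verify that a fixed point satisfies both range conditions simultaneously. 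A pure dimension count cannot work here --- for a generic pair of subspaces the simultaneous condition fails, so the PPT hypothesis must enter in an essential way --- and in the published range-based arguments this step is carried out by a case analysis on the pair $(\rank\rho,\rank\rho^{\Gamma})$ rather than by a fixed-point theorem. Announcing that Brouwer ``seems unavoidable'' does not discharge the obligation to exhibit the map. The bound of $4$ terms is likewise unproved: your peeling yields up to $\rank\rho+\rank\rho^{\Gamma}\le 8$ product terms, and the Carath\'eodory route you gesture at in $\cM_4^{sa}$ gives a much worse constant.

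For comparison, the paper also uses Brouwer, but at a completely different place. It first settles the \emph{bistochastic} case via the Bloch ball: a unital trace-preserving positive map on $\cM_2^{sa}$ is a norm-$\le 1$ operator on $\R^3$, hence a convex combination of at most $4$ elements of $\gO(3)$, each corresponding to $\Phi_U$ or $\Phi_U\circ T$ --- this is where the count of $4$ comes from. It then proves a Sinkhorn-type scaling (Proposition \ref{prop:general-to-unital-TP}): every $\Phi$ in the interior of $\cP$ equals $\Phi_{A^{-1}}\circ\tilde{\Phi}\circ\Phi_{B^{-1}}$ with $\tilde{\Phi}$ bistochastic, the fixed point being that of the explicit map $S\mapsto\Phi\big(\Phi^*(S)^{-1}\big)^{-1}$, normalized to act on the compact convex set of states. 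There the self-map is written down and its continuity and well-definedness follow at once from interiority; that is precisely the concrete construction your outline is missing an analogue of.
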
  
In what follows we will describe -- for completeness -- the background of the result and go over the (rather standard) notation.  
However,  a reader familiar with the subject may just consult the statements of Proposition \ref{prop:general-to-unital-TP} 
and Lemma \ref{lemma:interior}, and read the proof of Proposition \ref{prop:general-to-unital-TP}, which together 
take less than a page. {We point that although many proofs of St{\o}rmer's theorem appeared in the literature \cite{KCKL00,VDM01,LMO06,KSW09}, 
we are not aware of an argument along the ideas of Section \ref{section:brouwer}}.

\section{The background} 

Since maps of the form 
\begin{equation}  \label{eq:PhiM}
\Phi_M(\rho) := M\rho M^\dagger
\end{equation}
 generate the cone of {\em completely positive} maps, an equivalent restatement of Theorem \ref{theorem:stormer} is that every positivity-preserving map on $\cM_2^{sa}$ is 
{\em decomposable}, i.e., 
can be represented as a sum of a completely positive map and a {\em co-completely positive} map 
(that is, the composition of a completely positive map and the transposition).  
By duality, St{\o}rmer's theorem implies that a state on a $2 \times 2$ quantum system is separable 
iff its {\em partial transpose} is positive \cite{HHH96}. 
However, these concepts and facts are not needed for our proof; we refer the interested reader to 
the forthcoming book \cite{book}, which also contains a version of the present argument. 

The starting point of most proofs of Theorem \ref{theorem:stormer} is the realization that 
the representation \eqref{eq:stormer} is rather easy to obtain 
-- modulo {\em very} classical facts --  if the map $\Phi$ is {\em bistochastic}, that is, {\em unital}  (i.e., $\Phi(\Id) = \Id$) 
and {\em trace  preserving} (i.e., $\tr \Phi(\rho) = \tr \rho$ for all $\rho$ in the domain). 
This is because the set of states on $\C^2$ 
\[
\cD =  \cD(\C^2) = \{\rho \in \PSD(\C^2) : \tr \rho =1 \}
 \]
has a particularly simple structure:  it is a $3$-dimensional {\em Euclidean ball} (the Bloch ball). 
 More precisely, it is a ball of radius 
 $1/\sqrt{2}$ in the Frobenius (or Hilbert--Schmidt) norm and centered at $\Id/2 =: \rho_*$ (the maximally mixed state). 
Now, $\Phi$ being trace- and positivity-preserving is equivalent to $\Phi(\cD) \subset \cD$ and 
$\Phi$ being unital is equivalent to $\Phi(\rho_*) = \rho_*$, so such $\Phi$ can be identified 
with a {\em linear} operator $R : \R^3 \to \R^3$ which maps the unit ball in $\R^3$ into itself. 
This means that the norm of $R$  (the usual, operator or spectral norm) is at most $1$ and, 
consequently, $R$ is a convex combination of (at most $4$, by Carath\'eodory's theorem 
applied to the cube $[-1,1]^3$)
 isometries of $\R^3$, or elements of $\gO(3)$. 
If $R \in \gSO(3)$, then it is well-known that $R$ corresponds in the above way to the map that is 
of the form $\Phi_U$, for some $U \in \gU(2)$; this is an instance of the so-called {\em spinor map. }  
Since, as is easy to check,  
the transpose map $T$ on $\cM_2^{sa}$ corresponds to a reflection on $\R^3$, any 
$R \in \gO(3) \setminus \gSO(3)$ corresponds to a map of the form $\Phi_U \circ T$. 
Combining these observations we conclude that any bistochastic map $\Phi : \cM_2^{sa} \to \cM_2^{sa}$  
can be represented as a convex combination of at most $4$ maps that are of the form $\Phi_U$
or  $\Phi_U \circ T$, for some $U \in \gU(2)$, which in particular shows that 
$\Phi$ verifies the assertion of Theorem  \ref{theorem:stormer}. 

\smallskip 
Having settled the bistochastic case, we now want to deduce the general one. 
Two possible strategies to achieve that are:  

1. Focus on maps $\Phi$  generating {\em extreme rays} of $\cP(\C^2)$, and conclude via the Krein--Milman theorem. 

2. Focus on maps $\Phi$  belonging to the {\em interior} of $\cP(\C^2)$, and conclude 
by passing to the closure.

The usual approach, starting with St{\o}rmer's proof, was to use the first strategy. 
We will choose the second one.  
For a one-stop reading experience,  we also present 
at the end of this note 
a self-contained proof following the traditional outline, but highly streamlined.

\section{A proof via Brouwer's theorem} \label{section:brouwer}
The crucial  observation is that maps belonging to the interior of 
$\cP$ are, in a sense, equivalent to bistochastic ones.  

\begin{proposition} \label{prop:general-to-unital-TP} 
Let $\Phi : \cM_n^{\sa} \to \cM_n^{\sa}$ be a linear map which belongs to the interior of 
$\cP$, the cone of positivity-preserving maps. 
Then there exist  positive-definite operators  $A,B$  
such that 
\begin{equation} \label{eq:eneral-to-unital-TP}
\tilde{\Phi} = \Phi_A\circ \Phi \circ \Phi_B
\end{equation}
 is bistochastic, where $\Phi_A, \Phi_B$ are defined by \eqref{eq:PhiM}. 
\end{proposition}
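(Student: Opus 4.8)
The plan is to recast bistochasticity as a pair of matrix equations and then produce a solution by a Brouwer fixed-point argument. Writing $A = X^{1/2}$ and $B = Y^{1/2}$ for (as yet unknown) positive-definite $X,Y$, and letting $\Phi^*$ denote the adjoint of $\Phi$ for the Hilbert--Schmidt inner product (so that $\tr(\sigma\,\Phi(\rho)) = \tr(\Phi^*(\sigma)\,\rho)$), a direct computation shows that $\tilde\Phi = \Phi_A \circ \Phi \circ \Phi_B$ is unital exactly when $A\,\Phi(B^2)\,A = \Id$, i.e. $\Phi(Y) = X^{-1}$, and is trace-preserving exactly when $B\,\Phi^*(A^2)\,B = \Id$, i.e. $\Phi^*(X) = Y^{-1}$ (the last equivalence using nondegeneracy of the trace pairing on $\cM_n^{\sa}$). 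So it suffices to find positive-definite $X,Y$ with
\begin{equation} \label{eq:pair}
\Phi(Y) = X^{-1} \qquad\text{and}\qquad \Phi^*(X) = Y^{-1}.
\end{equation}

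First I would observe that membership of $\Phi$ in the interior of $\cP$ forces a strict positivity property: $\Phi(\rho)$ is positive-definite for every nonzero $\rho \in \PSD$ (and likewise for $\Phi^*$, since $\Phi \in \cP \Leftrightarrow \Phi^* \in \cP$ and the adjoint is a linear homeomorphism carrying the cone onto itself, hence interior to interior). Indeed, the defining inequalities of $\cP$ are $\langle v, \Phi(ww^\dagger) v\rangle \geq 0$ over unit vectors $v,w$, and an interior $\Phi$ makes all of these strict, whence $\Phi(ww^\dagger) > 0$ and thus $\Phi(\rho) > 0$ for $\rho \neq 0$. This lets me define, for $Y \in \PSD$ with $Y \neq 0$, the map $G(Y) := \bigl(\Phi^*(\Phi(Y)^{-1})\bigr)^{-1}$, which is positive-definite and homogeneous of degree $1$; a solution of \eqref{eq:pair} corresponds to a $Y$ with $G(Y) = Y$, on setting $X = \Phi(Y)^{-1}$. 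After normalizing, $\tilde G(Y) := G(Y)/\tr G(Y)$ is then a continuous self-map of the compact convex set $\cD = \{\rho \in \PSD(\C^n) : \tr \rho = 1\}$ of density matrices; continuity up to the boundary is exactly what the strict-positivity property buys me, since $\Phi(Y)$ and $\Phi^*(\Phi(Y)^{-1})$ have eigenvalues bounded below uniformly on $\cD$. Brouwer's fixed-point theorem then yields $Y_0 \in \cD$ with $\tilde G(Y_0) = Y_0$.

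It remains to upgrade this normalized fixed point to an exact solution of \eqref{eq:pair}. Writing $G(Y_0) = \mu Y_0$ with $\mu = \tr G(Y_0) > 0$ and $X_0 = \Phi(Y_0)^{-1}$, the relation $G(Y_0) = \mu Y_0$ reads $\Phi^*(X_0) = \mu^{-1} Y_0^{-1}$, while $\Phi(Y_0) = X_0^{-1}$ by definition. Pairing these gives $n = \tr(X_0^{-1} X_0) = \tr(\Phi(Y_0) X_0) = \tr(Y_0\,\Phi^*(X_0)) = \mu^{-1}\tr(\Id) = \mu^{-1} n$, forcing $\mu = 1$. Thus $X_0,Y_0$ solve \eqref{eq:pair}, and $A = X_0^{1/2}$, $B = Y_0^{1/2}$ are the desired positive-definite operators. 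I expect the main obstacle to be the well-definedness and continuity of $\tilde G$ on all of $\cD$ --- in particular on its boundary, where $Y$ is singular --- which is precisely where the hypothesis that $\Phi$ lies in the \emph{interior} of $\cP$ (rather than merely in $\cP$) is essential; once that is in hand, both Brouwer's theorem and the scalar bookkeeping that pins $\mu = 1$ are routine.
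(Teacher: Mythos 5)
Your proposal is correct and follows essentially the same route as the paper: reduce bistochasticity to a fixed-point equation for the composite map $S \mapsto \bigl(\Phi^{(*)}(\Phi^{(*)}(S)^{-1})\bigr)^{-1}$, normalize to get a self-map of $\cD(\C^n)$, apply Brouwer, and then show the resulting eigenvalue equals $1$. The only (harmless) differences are that you iterate in the variable $Y=B^2$ rather than $S=A^2$, and you pin down $\mu=1$ by the trace pairing $\tr(\Phi(Y_0)X_0)=\tr(Y_0\,\Phi^*(X_0))$ instead of the paper's observation that the resulting trace-preserving $\tilde\Phi$ sends $\Id$ to $\alpha^{-1}\Id$.
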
  
Once the Proposition  is shown, Theorem \ref{theorem:stormer} 
readily follows. Indeed,  \eqref{eq:eneral-to-unital-TP} is equivalent to $\Phi= \Phi_{A^{-1}}\circ \tilde{\Phi}  \circ \Phi_{B^{-1}}$, and appealing to 
the already proved bistochastic case shows that 
$\Phi$ admits a representation of the form \eqref{eq:stormer}. 
Finally, there are no issues with passing to the closure since 
any term in \eqref{eq:stormer} must belong to the compact set $\{\Psi \in \cP : \Phi - \Psi \in \cP\}$. 

{Proposition \ref{prop:general-to-unital-TP} is closely related to Theorem 4.7 from \cite{Gurvits04}.  
However,  \cite{Gurvits04} required a constructive -- and hence a relatively involved -- proof. 
Similar statements were known earlier for completely positive maps (see, e.g.,  \cite{GGHE08} and its references), 
but of course that would not be useful for our purposes.}  We thank David Reeb for bringing these references to our
attention. 

For our proof of  Proposition \ref{prop:general-to-unital-TP} we will need some notation and two lemmas. 
First, let $\Phi^*$ denote the usual functional analytic adjoint of $\Phi  : \cM_n^{\sa} \to \cM_n^{\sa}$, 
based on identifying $\cM_n^{sa}$ 
with its dual via  $\scalar{\rho}{\sigma}_{\HS} := \tr ({\rho}{\sigma})$.  
The following properties of the operation $^*$  are well-known (and easy to show).

\begin{lemma} \label{lemma:star-properties} 
Let $\Phi  : \cM_n^{\sa} \to \cM_n^{\sa}$. Then \\
(i) $\Phi \in \cP(\C^n)$ if and only if  $\Phi^* \in \cP(\C^n)$ \\ 
(ii) $\Phi$  is unital  if and only if  $\Phi^*$ is trace-preserving, and vice versa \\
(iii) if $M\in \cM_n$, then $\Phi_M^* = \Phi_{M^\dagger}$. 
\end{lemma}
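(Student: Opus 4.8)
The plan is to reduce every assertion to the defining relation $\scalar{\Phi(\rho)}{\sigma}_{\HS} = \scalar{\rho}{\Phi^*(\sigma)}_{\HS}$ together with three elementary facts: the self-duality of the cone $\PSD$, the nondegeneracy of the Hilbert--Schmidt pairing on $\cM_n^{\sa}$, and the cyclicity of the trace. I would also invoke the involutivity $\Phi^{**} = \Phi$ to obtain each ``vice versa'' for free.

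For (i), the one genuine input is that $\PSD$ is self-dual, i.e.\ $\rho \in \PSD$ if and only if $\tr(\rho\sigma) \geq 0$ for every $\sigma \in \PSD$. Granting this, I would argue as follows. Fix $\sigma \in \PSD$. Then $\Phi^*(\sigma) \in \PSD$ iff $\tr\big(\Phi^*(\sigma)\,\rho\big) \geq 0$ for all $\rho \in \PSD$; but $\tr(\Phi^*(\sigma)\rho) = \scalar{\rho}{\Phi^*(\sigma)}_{\HS} = \scalar{\Phi(\rho)}{\sigma}_{\HS} = \tr(\Phi(\rho)\sigma)$. Hence $\Phi^* \in \cP$ is equivalent to $\tr(\Phi(\rho)\sigma) \geq 0$ for all $\rho,\sigma \in \PSD$, which by a second application of self-duality (now in the variable $\sigma$, for each fixed $\rho$) says exactly that $\Phi(\rho) \in \PSD$ for all $\rho \in \PSD$, that is, $\Phi \in \cP$.

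For (ii), I would compute $\tr \Phi^*(\sigma) = \scalar{\Id}{\Phi^*(\sigma)}_{\HS} = \scalar{\Phi(\Id)}{\sigma}_{\HS} = \tr\big(\Phi(\Id)\sigma\big)$, while $\tr\sigma = \tr(\Id\,\sigma)$. Thus $\Phi^*$ is trace-preserving iff $\tr\big((\Phi(\Id)-\Id)\sigma\big)=0$ for all $\sigma \in \cM_n^{\sa}$, and by nondegeneracy of the pairing this is equivalent to $\Phi(\Id)=\Id$, i.e.\ $\Phi$ unital. For (iii), cyclicity of the trace gives $\scalar{\Phi_M(\rho)}{\sigma}_{\HS} = \tr(M\rho M^\dagger \sigma) = \tr(\rho\, M^\dagger \sigma M) = \scalar{\rho}{\Phi_{M^\dagger}(\sigma)}_{\HS}$, so that $\Phi_M^* = \Phi_{M^\dagger}$; one checks along the way that $M\rho M^\dagger$ and $M^\dagger \sigma M$ are again Hermitian when $\rho,\sigma$ are.

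There is no serious obstacle here: the only nontrivial ingredient is the self-duality of $\PSD$ used twice in (i), which itself follows from the spectral theorem (writing $\rho = \sum_i \lambda_i \ketbra{e_i}{e_i}$ and testing against rank-one projectors). Everything else is bookkeeping with the adjoint relation.
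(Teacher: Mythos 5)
Your proof is correct, and it is the standard argument: the paper itself offers no proof of this lemma, dismissing it as ``well-known (and easy to show),'' so there is nothing to diverge from. Each of your three steps --- self-duality of $\PSD$ for (i), nondegeneracy of the Hilbert--Schmidt pairing for (ii), cyclicity of the trace for (iii), with $\Phi^{**}=\Phi$ supplying the ``vice versa'' --- is exactly the intended routine verification.
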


The second lemma describes maps belonging to the interior of $\cP(\C^n)$;  
its proof is 
straightforward and based on very general principles.  

\begin{lemma} \label{lemma:interior}
Let $\Phi : \cM_n^{\sa} \to \cM_n^{\sa}$ be a linear map. The following conditions are equivalent.\\
(i) $\Phi$ belongs to the interior of $\cP(\C^n)$. \\
(ii) $\Phi = (1-t) \Psi + t\Omega$ with $t\in (0,1]$ and $\Psi  \in \cP(\C^n)$, where  
$\Omega(\rho) := (\tr \rho)\rho_* = (\tr \rho)~\Id/n$ 
is the completely depolarizing map.  \\
(iii) $\Phi^*$ belongs to the interior of $\cP(\C^n)$.  \\
(iv) If $\rho \in \cD(\C^n)$, then $\Phi(\rho)$ is positive definite. \\
(v) If $\rho \in \PSD(\C^n)$ and  $\rho \neq 0$, then $\Phi(\rho)$ is positive definite.
\end{lemma}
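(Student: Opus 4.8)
The plan is to route every equivalence through condition (v) together with the description of $\cP(\C^n)$ as an intersection of half-spaces. I first dispose of (iv) $\iff$ (v) by homogeneity: each nonzero $\rho \in \PSD(\C^n)$ is a positive multiple of $\rho/\tr\rho \in \cD(\C^n)$, and positive definiteness is unaffected by scaling by a positive constant, so $\Phi(\rho)$ is positive definite exactly when $\Phi(\rho/\tr\rho)$ is.

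The heart of the matter is (i) $\iff$ (v). Writing $\ell_{x,y}(\Phi) = \scalar{y}{\Phi(\ketbra{x}{x})\,y}$ for unit vectors $x,y \in \C^n$, the facts that a Hermitian matrix is PSD iff $\scalar{y}{\cdot\,y}\geq 0$ for all $y$, and that every PSD matrix is a nonnegative combination of rank-one projections $\ketbra{x}{x}$, together give
\[
\Phi \in \cP(\C^n) \iff \ell_{x,y}(\Phi) \geq 0 \ \text{ for all unit } x, y .
\]
Hence $\cP(\C^n)$ is cut out by the family of functionals $C = \{\ell_{x,y}\}$, which is the image of the (compact) product of two unit spheres under the continuous map $(x,y)\mapsto \ell_{x,y}$; thus $C$ is compact and avoids the zero functional. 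I would then use the general principle that a cone defined by a compact family of nonzero functionals has interior equal to the locus where all inequalities are strict, giving $\Phi \in \operatorname{int}\cP(\C^n) \iff \ell_{x,y}(\Phi) > 0$ for all unit $x,y$, i.e. $\Phi(\ketbra{x}{x})$ is positive definite for every unit $x$; by the spectral decomposition this is exactly condition (v). As a byproduct this shows $\operatorname{int}\cP(\C^n)$ is nonempty (it contains $\Omega$), so the cone is full-dimensional and the ordinary interior is the right notion.

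It remains to fold in (ii) and (iii). The implication (ii) $\Rightarrow$ (v) is immediate, since for $\rho \in \PSD(\C^n)\setminus\{0\}$ the matrix $\Phi(\rho) = (1-t)\Psi(\rho) + t(\tr\rho)\,\Id/n$ is a PSD matrix plus a positive-definite one (because $t>0$ and $\tr\rho>0$). For (i) $\Rightarrow$ (ii) I would use $\Omega \in \cP(\C^n)$ together with openness: if $\Phi$ is interior then $\Phi - t\Omega \in \cP(\C^n)$ for all sufficiently small $t>0$, and $\Psi := (\Phi - t\Omega)/(1-t) \in \cP(\C^n)$ furnishes the required decomposition. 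Finally (i) $\iff$ (iii) holds because, by Lemma \ref{lemma:star-properties}(i), $\Phi \mapsto \Phi^*$ is a linear automorphism of the space of self-maps of $\cM_n^{\sa}$ that preserves $\cP(\C^n)$, and such an automorphism carries the interior of the cone onto itself.

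The one step demanding genuine care is the general principle behind (i) $\iff$ (v). One inclusion — that strict positivity of all $\ell_{x,y}$ forces $\Phi$ into the interior — relies on the uniform bounds $\sup_{\ell\in C}\|\ell\|<\infty$ and $\inf_{\ell\in C}\ell(\Phi)>0$, both supplied by compactness of $C$; the reverse inclusion uses that no $\ell\in C$ is the zero functional, so an interior point cannot satisfy any $\ell_{x,y}(\Phi)=0$ (else a small perturbation in a descent direction of $\ell_{x,y}$ would leave $\cP(\C^n)$). Each argument is only a few lines, but correctly verifying the compactness and non-degeneracy of the family $C$ is where the actual content resides.
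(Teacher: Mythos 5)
Your proof is correct. Note that the paper does not actually supply an argument for this lemma --- it merely asserts that the proof ``is straightforward and based on very general principles'' --- so there is nothing to compare against line by line; your write-up is a legitimate filling-in of that gap. The two points where something could have gone wrong are both handled properly: (a) the reduction of membership in $\cP(\C^n)$ to the compact family of functionals $\ell_{x,y}(\Phi)=\bra{y}\Phi(\ketbra{x}{x})\ket{y}$ indexed by the product of unit spheres, with the observation that no $\ell_{x,y}$ is the zero functional (e.g.\ $\ell_{x,y}(\Omega)=1/n>0$), which is exactly what makes ``interior $=$ all inequalities strict'' valid in both directions; and (b) closing the logical cycle, since you prove (i)$\Leftrightarrow$(v), (iv)$\Leftrightarrow$(v), (i)$\Rightarrow$(ii)$\Rightarrow$(v), and (i)$\Leftrightarrow$(iii), which covers all five conditions. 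The duality step (i)$\Leftrightarrow$(iii) via the fact that $\Phi\mapsto\Phi^*$ is a linear involution of the ambient space preserving $\cP(\C^n)$ (Lemma \ref{lemma:star-properties}(i)), hence carrying interior to interior, is the cleanest available route and is surely what the authors had in mind.
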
 

\begin{proof} [Proof of Proposition \ref{prop:general-to-unital-TP}]  
Given positive definite $A$ and $B$,  let $\tilde{\Phi}$ be given by the formula from the Proposition. Then 
\begin{equation}   \label{eq:general-to-unital-TP1} 
\tilde{\Phi} \ \hbox{ is unital } \Leftrightarrow A \Phi(B^2)A = \Id \Leftrightarrow  \Phi(B^2)=A^{-2} 
\Leftrightarrow \Phi(B^2)^{-1} = A^2.
\end{equation} 
We next note that, by Lemma \ref{lemma:star-properties} (iii), Eq. \eqref{eq:eneral-to-unital-TP} 
can be rewritten as 
$\tilde{\Phi}^* = \Phi_B\circ \Phi^*\circ \Phi_A$. Accordingly,    
by Lemma \ref{lemma:star-properties} (ii), 
\begin{equation} \label{eq:general-to-unital-TP2}
\tilde{\Phi} \ \hbox{ is trace-preserving } \Leftrightarrow \tilde{\Phi}^* \hbox{ is unital } \Leftrightarrow  B \Phi^*(A^2)B = \Id \Leftrightarrow  \Phi^*(A^2)=B^{-2}. 
\end{equation} 
Solving the last equation in \eqref{eq:general-to-unital-TP2} for $B^2$ and substituting in 
\eqref{eq:general-to-unital-TP1} we are led to a system of equations
\begin{equation} \label{eq:general-to-unital-TP3}
B^2 = \Phi^*(A^2)^{-1} \ \hbox{ and } \  \Phi\big(\Phi^*(A^2)^{-1}\big)^{-1} = A^2. 
\end{equation} 
The second equation in  \eqref{eq:general-to-unital-TP3}  says that $S=A^2$ 
is a fixed point of the function 
\begin{equation} \label{eq:general-to-unital-TP4}
S \mapsto f(S) := \Phi\big(\Phi^*(S)^{-1}\big)^{-1} . 
\end{equation} 
Conversely, if  $S$ is a positive definite fixed point of $f$, then $A= S^{1/2}$ and 
$B = \Phi^*(A^2)^{-1/2}$ 
satisfy \eqref{eq:general-to-unital-TP1} and  \eqref{eq:general-to-unital-TP2} and 
yield $\tilde{\Phi}$ which is unital and trace-preserving. 
Note that, by Lemma \ref{lemma:interior}, the hypothesis 
``$\Phi$ belongs to the interior of $\cP$'' guarantees that all the inverses 
and negative powers above make sense,  
and that $f$ is well-defined and continuous on $\PSD \setminus \{0\}$. 

To find a fixed point of $f$ we want to use Brouwer's fixed-point theorem, which requires a 
(continuous) function 
that is a self-map of a compact convex set. One way to arrive at that setting is to consider 
$f_1 : \cD(\C^n) \to \cD(\C^n)$ defined by 
\begin{equation} \label{eq:general-to-unital-TP5}
f_1(\sigma) = \frac{f(\sigma)}{\tr f(\sigma)} . 
\end{equation} 
It then follows that there is $\sigma_0 \in D(\C^n)$ such that $f_1(\sigma_0) = \sigma_0$ 
and hence $f(\sigma_0) = \alpha \sigma_0$, where $\alpha = \tr f(\sigma_0)>0$. 
The final step is to note that  if we choose -- as before -- 
$A= \sigma_0^{1/2}$ and $B = \Phi^*(A^2)^{-1/2}$, then the corresponding 
$\tilde{\Phi}$ is trace-preserving and satisfies $\tilde{\Phi} (\Id) = \alpha ^{-1} \Id$, 
which is only possible if $\alpha =1$. In other words, $\sigma_0$ is a fixed point of $f$ 
that we needed to conclude the argument.  
%
\end{proof}
\begin{remark} If properly stated, Proposition \ref{prop:general-to-unital-TP} generalizes -- 
with essentially the same proof -- to maps $\Phi : \cM_m^{\sa} \to \cM_n^{\sa}$ with $m\neq n$. 
The correct conditions are that $\tilde{\Phi}$ is trace preserving and that it sends  the maximally mixed state 
$\Id/m \in \cD(\C^m)$ to the maximally mixed state 
$\Id/n \in \cD(\C^n)$. 
This suggests in particular a possible path to a simple proof of Woronowicz's theorem \cite{Woronowicz76} 
(a version of Theorem \ref{theorem:stormer} for maps $\Phi : \cM_2^{sa} \to \cM_3^{sa}$) 
by reducing it to the case of maps verifying these two conditions. 

\end{remark}

\section{A traditional proof} 

The second proof we present is based on the more traditional strategy, a description of the maps 
$\Phi$  generating {\em extreme rays} of $\cP(\C^2)$. 
That description is most conveniently expressed as a statement about the Lorenz cone 
$$
\mL_m = \big\{x=(x_0,x_1,\ldots,x_{m-1}) : x_0\geq 0, \; q(x) \geq 0 \big\}, 
$$ 
where $q(x) :=  x_0^2 - \sum_{k=1}^{m-1} x_k^2$. If  $\LP$ 
is the cone of linear maps on $\R^m$ that preserve $\mL_m$, we have \cite{LoewySchneider75}

\begin{proposition} \label{prop:loewy-schneider}
Let $\Phi : \R^m  \to \R^m $ be a 
linear map which generates an extreme ray of $\LP$. 
Then either $\Phi$ is an automorphism of $\mL_m $ or $\Phi$ is of rank one, in which case  
$\Phi = \ketbra{u}{v}$  for some $u, v \in \partial \mL_m  \setminus\{0\}$. 
If $m>2$, the converse implication also holds. 
\end{proposition}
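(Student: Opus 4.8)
The plan is to characterize the extreme rays of $\LP$, the cone of self-maps of the Lorentz cone $\mL_m$, by analyzing how such a map can act on the boundary $\partial\mL_m$, which is a quadric. I would separate the problem according to the rank of $\Phi$. The easy direction is to verify that the two stated families do generate extreme rays (at least for $m>2$): automorphisms of $\mL_m$ generate extreme rays because the automorphism group acts transitively enough that no nontrivial decomposition $\Phi = \Psi_1 + \Psi_2$ with $\Psi_i \in \LP$ can exist, and rank-one maps $\ketbra{u}{v}$ with $u,v \in \partial\mL_m$ generate extreme rays of the cone essentially because rank-one elements are extreme in any cone of the form $\{R : R(\mL_m) \subset \mL_m\}$ once one checks that $v$ must be a boundary functional.

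For the forward (harder) direction, suppose $\Phi$ generates an extreme ray and is \emph{not} an automorphism. First I would record that $q$ is a nondegenerate quadratic form of Lorentzian signature, so $\Phi \in \LP$ means $\Phi$ maps the cone $\{q \geq 0,\, x_0 \geq 0\}$ into itself; the key analytic fact is that $\Phi$ then satisfies a pointwise inequality on the boundary, namely $q(\Phi x) \geq 0$ whenever $q(x)=0, x_0 \geq 0$. The central algebraic step is to invoke the $S$-lemma (the link to control theory and quadratic programming mentioned in the abstract): the condition ``$q(x) \geq 0 \Rightarrow q(\Phi x) \geq 0$'' on the cone translates, via homogeneity and the $S$-lemma, into the existence of a scalar $\lambda \geq 0$ with $q(\Phi x) \geq \lambda\, q(x)$ for all $x$, i.e. the matrix inequality $\Phi^{T} J \Phi \succeq \lambda J$ where $J = \diag(1,-1,\ldots,-1)$ represents $q$. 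This is the engine that converts a cone-preservation hypothesis into a clean quadratic-matrix inequality.

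Given $\Phi^{T} J \Phi \succeq \lambda J$, I would split into cases on $\lambda$. If $\lambda > 0$ then $\Phi$ is invertible and, after rescaling, $\Phi/\sqrt{\lambda}$ preserves $J$ up to a positive semidefinite slack, from which one argues that $\Phi$ lies in the interior of $\LP$ or can be written as a sum of two cone-maps unless it is actually an automorphism (the $\lambda>0$ strict case forces $\Phi^T J \Phi = \lambda J$ by extremality, making $\Phi$ a scalar multiple of a Lorentz transformation). If $\lambda = 0$, then $\Phi^{T} J \Phi \succeq 0$, meaning the quadratic form $x \mapsto q(\Phi x)$ is positive semidefinite; since $q$ has signature $(1,m-1)$, a positive semidefinite pullback of $q$ through $\Phi$ forces the range of $\Phi$ to be totally isotropic-to-one-side, and here is where extremality together with the signature constraint collapses the rank: the image of $\Phi$ must be a single ray in $\partial\mL_m$, giving $\Phi = \ketbra{u}{\,\cdot\,}$ with $u \in \partial\mL_m$, and the functional defining $\Phi$ must likewise be supported by a boundary point $v \in \partial\mL_m$ lest $\Phi$ fail to preserve the cone.

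The main obstacle I anticipate is the rank-reduction in the $\lambda=0$ case: showing that a nonzero, non-automorphism extreme map whose associated form $q\circ\Phi$ is positive semidefinite must have rank exactly one, rather than some intermediate rank. The delicate point is that a positive semidefinite form obtained by pulling back a Lorentzian form can only have rank $0$ or $1$ on the image (since two independent timelike-or-null directions summing positively would contradict the signature), but translating this into rank one \emph{for $\Phi$ itself} requires carefully ruling out that the kernel of $q\circ\Phi$ contributes extra dimensions to $\range\Phi$; this is precisely where the extreme-ray hypothesis must be used to forbid writing $\Phi$ as a sum of two lower-rank cone-preserving maps. I would also need the hypothesis $m>2$ exactly in the converse direction, since for $m=2$ the cone $\mL_2$ is a product of two half-lines and every rank-one boundary map is automatically an automorphism-edge, so the dichotomy degenerates.
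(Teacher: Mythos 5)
Your overall strategy coincides with the paper's: apply the $S$-lemma to the pair $(J,\Phi^* J\Phi)$ to obtain $\Phi^* J \Phi = \mu J + Q$ with $\mu \geq 0$ and $Q$ positive semi-definite, then argue that extremality forces either $Q=0$ (whence $\mu^{1/2}\Phi$ is a Lorentz transformation) or rank one. However, there is a genuine gap at the decisive step. When $\mu>0$ and $Q\neq 0$ but $Q$ is not positive \emph{definite}, $\Phi$ is neither an automorphism nor an interior point of $\LP$, and your proposal only asserts that ``extremality forces $\Phi^* J\Phi=\lambda J$'' without exhibiting any decomposition of $\Phi$ witnessing non-extremality. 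That is precisely where the work lies. The paper closes this by an explicit rank-one perturbation: take a spectral term $\ketbra{v}{v}$ of $Q$, choose $u\neq 0$ with $\Phi^* J u=\delta v$ for $\delta\in\{0,1\}$ (using either invertibility of $\Phi^*$ or nontriviality of the nullspace of $\Phi^* J$), and verify by direct computation that $(\Phi+s\ketbra{u}{v})^* J(\Phi+s\ketbra{u}{v})-\mu J$ stays positive semi-definite for $|s|\leq\e$, so that $\Phi\pm\e\ketbra{u}{v}\in\LP$ by the easy direction of the $S$-lemma; since the perturbation has rank one while $\rank\Phi\geq 2$, the two summands are not proportional to $\Phi$ and the ray is not extreme. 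Without this (or an equivalent) construction your case analysis does not close.

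Two smaller points. First, your worry about the $\lambda=0$ case is misplaced and the case is essentially vacuous: if $q\geq 0$ on $\range\Phi$ then $\range\Phi\subset\mL_m\cup(-\mL_m)$, which contains no $2$-dimensional subspace, so $\rank\Phi\leq 1$ immediately, with no appeal to extremality; dually, the paper observes that $\rank\Phi\geq 2$ forces $\mu>0$ because the image then meets the hyperplane $\{y_0=0\}$ in a nonzero vector, where $q<0$. It is cleaner (and is the paper's route) to dispose of the rank-one case first by the general Lemma \ref{lemma:extreme-rank-one} and run the $S$-lemma argument only for $\rank\Phi\geq 2$. Second, in the rank-one case the condition $v\in\partial\mL_m$ does \emph{not} follow from cone preservation (the map $\ketbra{u}{w}$ with $w$ interior to $\mL_m$ still preserves the cone); it follows from extremality, since $v$ must generate an extreme ray of the dual cone $\mL_m^*=\mL_m$ --- this is exactly the content of Lemma \ref{lemma:extreme-rank-one}.
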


Since $\PSD(\C^2)$   
is isomorphic to $\mL_4$, Proposition \ref{prop:loewy-schneider} yields a characterization of 
extreme rays of $\cP(\C^2)$ and -- by the Krein--Milman theorem -- reduces  the proof of 
Theorem \ref{theorem:stormer} to showing that the corresponding extreme maps admit a 
representation of type \eqref{eq:stormer}.

To establish the last fact, we note that the structure of the set of automorphisms of $\mathcal{L}_m$ 
is very well understood: they are of the form $t \Phi$, where $t>0$ and  $\Phi \in \gO^+(1,m-1)$, 
the {\em orthochronous subgroup} of the {\em Lorentz group} $\gO(1,m-1)$ 
of transformations preserving the 
quadratic form $q(x)= x_0^2-\sum_{k=1}^{m-1} x_k^2$.  However, for $m=4$ and for our purposes,  
it is more convenient to use the fact that automorphisms of $\PSD(\C^n)$ are of the form 
$\rho \mapsto V\rho V^\dagger$ or $\rho \mapsto V\rho^TV^\dagger$ for some $V\in \gGL(n)$, 
which immediately yields a representation of type \eqref{eq:stormer}. 
(This is an instance of Kadison's theorem \cite{Kadison65}, 
which  for $n=2$ is elementary and very simple.)

The case of rank one maps is even simpler:  
every element of $\partial \PSD(\C^2)\setminus\{0\}$ is of the 
form $\ketbra{\vphi}{\vphi}$, $\vphi \in \C^2\setminus\{0\}$, and so   $\Phi$  can be represented as
\[
\Phi(\rho) =   \tr(\rho \ketbra{\xi}{\xi}) \ketbra{\psi}{\psi} = \ket \psi \bra\xi \, \rho \ket \xi \bra \psi .  
\]
In other words, $\Phi = \Phi_{ \ketbra{\psi}{\xi}}$, as needed. 
It should be noted, however, that -- in absence further refinement -- 
this argument involves later an application 
of Carath\'eodory's theorem in a $15$-dimensional space (say, in $\{\Phi :  \tr \big(\Phi(\Id)\big) =n \}$), 
leading to a bound of $16$ on the number of terms in \eqref{eq:stormer}.

The above scheme of the proof of St{\o}rmer's theorem was apparently folklore for some time; 
it appears explicitly in \cite{MillerOlkiewicz15}. However, its value was limited by the fact that 
the proof  of Proposition \ref{prop:loewy-schneider} given in  \cite{LoewySchneider75} 
was itself long and computational. 
Our contribution, if any, consists in streamlining of the argument  given in \cite{Hildebrand05, Hildebrand07}, 
which rediscovered Proposition \ref{prop:loewy-schneider} and noted its relevance to 
the entanglement theory. 
The proof is based on the so-called $S$-lemma \cite{Yakubovich73}, 
 a well-known fact from control theory and quadratic/semi-definite programming.   

\begin{lemma} [$S$-lemma] \label{lemma:S-lemma-II}
Let $F, G$ be $n\times n$  symmetric real matrices. Assume that 
 there is an $\bar{x} \in \R^n$ such that $\bra{\bar{x}}G\ket{\bar{x}} > 0$.  
Then the following two statements about such $F, G$ are equivalent:\\
(i) if $x \in \R^n$ verifies $ \bra{x} G \ket{x}\geq 0$, then $ \bra{x} F \ket{x}\geq 0$\\ 
(ii) there exists  $\mu \geq 0$ such that $F - \mu G$ is positive semi-definite. 
\end{lemma}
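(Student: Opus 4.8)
The plan is to prove the nontrivial implication (i) $\Rightarrow$ (ii); the reverse is immediate, since if $F - \mu G \succeq 0$ with $\mu \geq 0$, then $\bra{x}F\ket{x} = \bra{x}(F-\mu G)\ket{x} + \mu \bra{x}G\ket{x} \geq 0$ whenever $\bra{x}G\ket{x} \geq 0$. The engine of the forward direction is the convexity of the \emph{joint numerical range}. I would introduce the map $\Psi : \R^n \to \R^2$, $\Psi(x) = (\bra{x}F\ket{x}, \bra{x}G\ket{x})$, and set $K = \Psi(\R^n)$. Because $\Psi(tx) = t^2 \Psi(x)$, the set $K$ is a cone; the key claim (Dines's theorem) is that $K$ is convex.

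To prove convexity I would first reduce to $n = 2$: given two points $\Psi(u), \Psi(v) \in K$ with $u, v$ linearly independent, restrict $F, G$ to $W = \mathrm{span}(u,v)$ and note $\Psi(W) \subseteq K$; if $u, v$ are dependent the two points already lie on a common ray. Thus it suffices to show $K$ is convex when $n = 2$. Writing $x = r(\cos\phi, \sin\phi)$ and $\omega = (\cos\phi,\sin\phi)$, one has $\Psi(x) = r^2(\bra{\omega}F\ket{\omega}, \bra{\omega}G\ket{\omega})$, and each coordinate is of the form $\alpha + \beta\cos 2\phi + \gamma\sin 2\phi$; hence the curve $\phi \mapsto \Psi(\omega)$ is the image of the circle $2\phi \mapsto (\cos 2\phi, \sin 2\phi)$ under an affine map, i.e.\ an ellipse $E$. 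Then $K$ is exactly the cone over $E$ (the set of nonnegative multiples of points of $E$), and this cone is convex whether the origin lies inside, on, or outside $E$ — this is the geometric heart of the argument and the step I expect to require the most care, especially in the degenerate cases where $E$ collapses to a segment or a point.

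With convexity of $K$ in hand, I would reformulate (i): it says precisely that $K$ is disjoint from the convex set $Q = \{(a,b) : a < 0,\ b \geq 0\}$. Since $Q$ has nonempty interior, the separating hyperplane theorem yields a nonzero $(\lambda_1, \lambda_2)$, and because $0 \in K \cap \overline{Q}$ the separation passes through the origin: $\lambda_1 a + \lambda_2 b \geq 0$ on $K$ and $\leq 0$ on $Q$. Testing the second inequality at the points $(-1, t)$ with $t \geq 0$ forces $\lambda_1 \geq 0$ and $\lambda_2 \leq 0$, while the first inequality reads $\lambda_1 F + \lambda_2 G \succeq 0$. Finally I would invoke the Slater-type hypothesis: if $\lambda_1 = 0$ then $\lambda_2 < 0$ and $\lambda_2 G \succeq 0$, which forces $\bra{\bar{x}}G\ket{\bar{x}} \leq 0$, contradicting $\bra{\bar{x}}G\ket{\bar{x}} > 0$. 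Hence $\lambda_1 > 0$, and dividing through gives $F - \mu G \succeq 0$ with $\mu = -\lambda_2/\lambda_1 \geq 0$, which is exactly (ii).
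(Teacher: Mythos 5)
Your proof is correct, but it is a genuinely different argument from the one in the paper. You take the classical route: Dines's theorem on the convexity of the joint range $K=\{(\bra{x}F\ket{x},\bra{x}G\ket{x}) : x\in\R^n\}$ (proved by reducing to $n=2$ and analyzing the cone over an ellipse), followed by a separating-hyperplane argument and the Slater condition to normalize the multiplier. The paper instead first passes to Yuan's symmetric reformulation (Lemma \ref{lemma:S-lemma}: either some convex combination $(1-t)M+tN$ is positive semi-definite, or some $x$ makes both forms negative) and proves that by contradiction, tracking the smallest eigenvalue $\lambda_t$ of $M_t=(1-t)M+tN$ and using that the eigenspace sphere $\Lambda_\tau$ at the critical parameter is connected; Lemma \ref{lemma:S-lemma-II} is then deduced with $M=F$, $N=-G$. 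The two arguments are of comparable length and each hinges on a small piece of ``hidden convexity'': yours on the convexity of the cone over an ellipse, the paper's on the connectedness of a sphere of dimension $\geq 1$. Your version buys the stronger intermediate statement (convexity of the joint numerical range, useful elsewhere) at the cost of invoking the separation theorem and handling the degenerate ellipse cases, which you correctly flag as the delicate step; the paper's version is more self-contained topologically and dispenses with convex separation altogether. One small point to watch in your write-up: in the reduction to $n=2$ you should note explicitly that the target convex combination $\lambda\Psi(u)+(1-\lambda)\Psi(v)$ lies in $\Psi(W)$ because $\Psi(u),\Psi(v)\in\Psi(W)$ and $\Psi(W)$ is convex by the two-dimensional case --- as stated, the sentence only records $\Psi(W)\subseteq K$. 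Otherwise the argument is complete.
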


We postpone the proof of the Lemma until the end of this Appendix and show how 
it implies the Proposition. (We leave out the ``converse'' part, which is easier 
and not needed for our purposes.)

\begin{proof} [Proof of Proposition \ref{prop:loewy-schneider}]  
The  case $\rank \Phi = 1$ is an immediate consequence of the following elementary observation, 
which completely characterizes extreme rays generated by rank one maps 
in a very general setting (we only need the ``only if'' part, which is very easy).  

\begin{lemma} \label{lemma:extreme-rank-one}
Let $\mC \subset \R^n$ be a nondegenerate cone and let $\cP(\mC)$ be the cone of 
linear maps preserving $\mC$.   
A rank one map $\Phi : \R^n \to \R^n$ generates an extreme ray of $\cP(\mC)$ iff 
it is of the form $\Phi = \ketbra{u}{v}$, with $u$ and $v$ generating extreme rays of 
respectively $\mC$ and the dual cone $\mC^*$.
\end{lemma}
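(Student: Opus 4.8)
The plan is to first determine which rank-one maps belong to $\cP(\mC)$, and then read off extremality from an elementary splitting argument. Every rank-one map can be written $\Phi = \ketbra{u}{v}$, i.e. $\Phi(x) = \langle v, x\rangle\, u$. I would begin by checking that a nonzero such $\Phi$ preserves $\mC$ if and only if --- after rescaling and fixing a sign --- $u \in \mC \setminus \{0\}$ and $v \in \mC^* \setminus \{0\}$. Sufficiency is immediate, since $v \in \mC^*$ gives $\langle v, x\rangle \geq 0$ for $x \in \mC$, so that $\Phi(x)$ is a nonnegative multiple of $u \in \mC$. For necessity I would invoke nondegeneracy: as $\mC$ is full-dimensional, the functional $\langle v, \cdot\rangle$ takes both signs on $\mC$ unless $v$ or $-v$ lies in $\mC^*$; were it to take both signs, the image $\Phi(\mC)$ would contain the entire line $\R u$, contradicting the fact that a nondegenerate (hence pointed) cone contains no line. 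Absorbing the sign into $u$ then yields $u \in \mC$ and $v \in \mC^*$.

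With this normalization in hand, the \emph{only if} direction --- the one we actually use --- is a two-line argument. Suppose $\Phi = \ketbra{u}{v}$ generates an extreme ray of $\cP(\mC)$ and, arguing by contradiction, that $u$ is not extreme in $\mC$: then $u = u_1 + u_2$ with $u_1, u_2 \in \mC$ and, say, $u_1$ not a multiple of $u$. Bilinearity gives $\Phi = \ketbra{u_1}{v} + \ketbra{u_2}{v}$, and by the previous paragraph each summand lies in $\cP(\mC)$; moreover $\ketbra{u_i}{v}$ is a multiple of $\Phi = \ketbra{u}{v}$ precisely when $u_i$ is a multiple of $u$. Thus $\Phi$ is exhibited as a sum of two elements of $\cP(\mC)$ that do not both lie on the ray $\R_+ \Phi$, contradicting extremality. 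Running the identical argument on a hypothetical nontrivial splitting $v = v_1 + v_2$ in $\mC^*$, via $\Phi = \ketbra{u}{v_1} + \ketbra{u}{v_2}$, forces $v$ to be extreme in $\mC^*$ as well.

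For completeness one would also want the converse (not needed for Proposition \ref{prop:loewy-schneider}), which is the only marginally less trivial half. Assuming $u$ and $v$ extreme, take any decomposition $\Phi = \Psi_1 + \Psi_2$ in $\cP(\mC)$. Evaluating at an extreme generator $w$ of $\mC$ gives $\Psi_1(w) + \Psi_2(w) = \langle v, w\rangle u \in \R_+ u$, and since $u$ is extreme while $\mC$ is pointed, each $\Psi_i(w)$ must itself lie in $\R_+ u$. As the extreme generators span $\R^n$, this forces $\range \Psi_i \subseteq \R u$, so $\Psi_i = \ketbra{u}{w_i}$ with $w_i \in \mC^*$ and $w_1 + w_2 = v$; extremality of $v$ then gives $w_i \in \R_+ v$, i.e. $\Psi_i \in \R_+\Phi$. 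The only point that genuinely uses the structure of $\mC$ is the sign analysis in the first step, where pointedness prevents $\langle v, \cdot\rangle$ from changing sign on $\mC$; the rest is the bilinearity of $(u,v)\mapsto \ketbra{u}{v}$ together with the reduction of proportionality of rank-one maps to that of their generators. For the \emph{only if} direction we actually use, I anticipate no real obstacle.
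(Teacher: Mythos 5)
Your proof is correct, and it follows exactly the elementary route the paper has in mind: the paper states this lemma without proof, dismissing it as a "very easy" observation, and the intended argument is precisely your normalization of a rank-one positive map to $\ketbra{u}{v}$ with $u\in\mC$, $v\in\mC^*$ (via pointedness of $\mC$) followed by the bilinear splitting $\ketbra{u_1+u_2}{v}=\ketbra{u_1}{v}+\ketbra{u_2}{v}$. Nothing to add; your treatment of the converse (which the paper explicitly does not need) is also sound, granted the standing assumption that $\mC$ is closed and convex so that its extreme rays span $\R^n$.
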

Above, $\mC$ being {\em nondegenerate} means that $\dim \mC = n$ and $-\mC \cap \mC = \{0\}$, 
while the {\em dual cone} is defined by $\mC^* := \{x \in \R^n :  \braket{x}{y} \geq 0 \ \hbox{ for all } y \in \mC\}$. 

Next, assume that $\rank \Phi \geq 2$. Let $J \in \gO(n)$ be the diagonal matrix with 
diagonal entries $1,-1,\ldots,-1$; then $\bra{x} J \ket{x}  = x_0^2-\sum_{k=1}^{m-1} x_k^2 = q(x) $ for $x\in \R^n$.  
The map $\Phi$ preserving $\mathcal{L}_n$ (and hence $-\mathcal{L}_n$) means that 
the hypothesis (i) of Lemma \ref{lemma:S-lemma-II} is satisfied with $G=J$ 
and $F=\Phi^* J \Phi$. Since clearly $-J$ is not positive definite, it follows that 
there is $\mu\geq 0$ and a positive semi-definite operator $Q$ such that 
\begin{equation} \label{eq:Slemma-applied}
\Phi^* J \Phi = \mu J +Q .
\end{equation} 
We now notice that  since $\rank \Phi \geq 2$, 
there is $y = \Phi x \neq 0$ such that $y_0=0$. In particular,  
$\bra{x}\Phi^* J \Phi\ket{x} = \bra{y} J \ket{y} < 0$. Given that  $\bra{x} Q \ket{x} \geq 0$, 
it follows that $\mu$ can not be $0$.  
Next, if $Q=0$, \eqref{eq:Slemma-applied} means precisely that $\mu^{1/2}\Phi \in 
\gO(1,n-1)$ and so $\Phi$ is an automorphism of $\mathcal{L}_n$. 

To complete the argument, we will show that if $Q\neq 0$, then there is a rank one operator 
$\Delta$ such that $\Phi \pm \Delta \in \LP$. 
Since $\Phi$ and $\Delta$ have different ranks, 
they are not proportional. Hence $\Phi + \Delta$ and $\Phi -\Delta$ do not belong 
to the ray generated by $\Phi$, which implies that the ray is not extreme. 

Let $\ketbra{v}{v}$, $v\neq 0$,  be one of the terms appearing in the spectral decomposition of $Q$; 
then  $Q = Q' + \ketbra{v}{v}$, where $Q'$ is positive semi-definite. 
Next,  let  $u\in \R^n\setminus \{0\}$ be such that $\Phi^* J u = \delta v$, where $\delta$ is either $1$ or $0$. 
Such $u$ exists: if $\Phi^*$ is invertible, then  $u=J(\Phi^*)^{-1}v$  satisfies $\Phi^* J u = v$, 
while in the opposite case the nullspace of $\Phi^* J$ is nontrivial. 
We will show that, for  some $\e >0$, 
\begin{equation} \label{eq:Delta} 
\Phi + s \ketbra{u}{v} \in \LP \ \hbox{ if } \ |s| \leq \e ,
\end{equation} 
thus supplying the needed $\Delta = \e \ketbra{u}{v}$.  We have, by \eqref{eq:Slemma-applied} and by the choice of $u$, 
\begin{eqnarray} \label{eq:perturbation}
(\Phi + s\ketbra{u}{v})^* J (\Phi + s\ketbra{u}{v}) &=& \mu J +Q+ 2s\delta \ketbra{v}{v} +s^2 \ketbra{v}{u} J \ketbra{u}{v} 
\nonumber \\
&=& \mu J +Q'+(1+2s\delta +s^2 \bra{u}J\ket{u}) \ketbra{v}{v} .
\end{eqnarray}  
Since clearly $1+2s\delta +s^2 \bra{u}J\ket{u} \geq 0$ if $|s|$ is sufficiently small, it 
follows that, for such $s$,  $(\Phi + s\ketbra{u}{v})^* J (\Phi + s\ketbra{u}{v}) - \mu J$ is positive semidefinite. 
Thus we can deduce from the easy part of Lemma \ref{lemma:S-lemma-II} that $\Phi + s\ketbra{u}{v} \in \LP$, as needed. 
(To be precise, we need to exclude the possibility that $\Phi + s\ketbra{u}{v} \in -\LP$, 
but this is simple.)  
%
\end{proof}
It remains to prove Lemma \ref{lemma:S-lemma-II}.  {We follow \cite{PolikTerlaky07};   
we first restate the Lemma in a simpler~form~\cite{Yuan90}.}
 
\begin{lemma}[$S$-lemma reformulated] \label{lemma:S-lemma}
Let $M, N$ be $n \times n$ symmetric real matrices. The following two statements are equivalent:\\
(i) $\{ x \in \R^n \st \bra{x} M \ket{x} \geq 0 \} \cup \{ x \in \R^n \st \bra{x} N \ket{x} \geq 0 \} = \R^n$ 
\\ 
(ii) there exists  $t \in [0,1]$ such that the matrix $(1-t)M+t N$ is positive semi-definite.
\end{lemma}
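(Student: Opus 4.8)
The plan is to prove the two implications separately, with all the content residing in $(i)\Rightarrow(ii)$. The implication $(ii)\Rightarrow(i)$ is immediate: if $(1-t)M+tN\succeq0$ for some $t\in[0,1]$, then for every $x\in\R^n$ we have $(1-t)\bra{x}M\ket{x}+t\bra{x}N\ket{x}\geq0$, and since the coefficients $1-t,t$ are nonnegative and sum to $1$, at least one of $\bra{x}M\ket{x},\bra{x}N\ket{x}$ must be nonnegative (otherwise the combination would be bounded above by $\max(\bra{x}M\ket{x},\bra{x}N\ket{x})<0$). Hence $x$ lies in one of the two sets in $(i)$.

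For the converse I would pass to the joint range of the two forms,
\[
W := \big\{(\bra{x}M\ket{x},\bra{x}N\ket{x}) : x\in\R^n\big\}\subset\R^2 .
\]
Since the forms are homogeneous of degree $2$, $W$ is a cone (stable under multiplication by nonnegative scalars) containing the origin, and condition $(i)$ translates into the purely geometric statement that $W$ is disjoint from the open third quadrant $\mathcal{Q}:=\{(a,b)\in\R^2:a<0,\ b<0\}$. Granting the key fact that $W$ is \emph{convex} (this is Dines' theorem; see below), the conclusion follows by separation. As $W$ and $\mathcal{Q}$ are disjoint convex sets and $\mathcal{Q}$ is open and nonempty, there is a nonzero $(\alpha,\beta)$ with $\alpha a+\beta b\geq0$ on $W$ and $\leq0$ on $\mathcal{Q}$; the separating constant may be taken to be $0$, since $0\in W$ forces it to be $\leq0$ while $0\in\overline{\mathcal{Q}}$ forces it to be $\geq0$. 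Testing the second inequality on $(-1,-\e)$ and $(-\e,-1)$ and letting $\e\to0^+$ gives $\alpha\geq0$ and $\beta\geq0$, while $\alpha a+\beta b\geq0$ on $W$ says exactly $\alpha M+\beta N\succeq0$. Since $\alpha+\beta>0$, setting $t=\beta/(\alpha+\beta)\in[0,1]$ yields $(ii)$.

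The main obstacle is therefore the convexity of $W$, which I would establish as follows. Because $W$ is a cone, convexity is equivalent to closure under addition ($W+W\subseteq W$), so it suffices to show $u+v\in W$ for $u=(\bra{x}M\ket{x},\bra{x}N\ket{x})$ and $v=(\bra{y}M\ket{y},\bra{y}N\ket{y})$. If $x,y$ are linearly dependent this is trivial; otherwise I restrict both forms to the plane $\mathrm{span}(x,y)$ and change coordinates so that $x,y$ become the standard basis. Writing $\bra{z}M\ket{z}=\tr(M\ketbra{z}{z})$, the restricted range is the image of the rank-one positive semi-definite matrices under the linear map $L(P)=(\tr(MP),\tr(NP))$ from the three-dimensional space of symmetric $2\times2$ matrices to $\R^2$, and in these coordinates $u+v=L(\Id)$ with $\Id\succeq0$.

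The crucial point is that every value $L(P)$ with $P\succeq0$ is already attained on a matrix of rank $\leq1$. Indeed, pick a nonzero $D$ in the kernel of $L$ (nontrivial, since $L$ maps a three-dimensional space to $\R^2$) and slide $P+sD$ to an endpoint of the interval $\{s\in\R:P+sD\succeq0\}$. This interval is nonempty (it contains $s=0$) and a proper subset of $\R$: if it were all of $\R$, dividing $P+sD\succeq0$ by $|s|$ and letting $|s|\to\infty$ would give $\pm D\succeq0$, forcing $D=0$. Hence a finite endpoint $s_0$ exists, at which $P+s_0 D$ is positive semi-definite and lies on the boundary of the positive semi-definite cone, so it is singular and thus of rank $\leq1$, while $L(P+s_0 D)=L(P)$. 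Applying this with $P=\Id$ exhibits $u+v=L(\Id)$ as the $L$-image of a rank-$\leq1$ matrix, i.e.\ as a point of $W$, completing the proof that $W$ is convex and hence the proof of the Lemma.
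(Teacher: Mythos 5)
Your proof is correct, but it takes a genuinely different route from the one in the paper. You prove $(i)\Rightarrow(ii)$ by the classical ``joint range'' argument: you show that $W=\{(\bra{x}M\ket{x},\bra{x}N\ket{x}) : x\in\R^n\}$ is a convex cone (Dines' theorem), and then separate it from the open third quadrant to produce nonnegative multipliers $\alpha,\beta$ with $\alpha M+\beta N$ positive semi-definite. Your proof of convexity -- reducing to the span of two vectors, viewing the range as the image of rank-one PSD $2\times 2$ matrices under a linear map $L$ into $\R^2$, and sliding $\Id+sD$ along a kernel direction $D$ of $L$ to the boundary of the PSD cone where the rank drops to $\leq 1$ -- is complete and correct; the only points worth double-checking are that the interval $\{s : \Id+sD\succeq 0\}$ is closed and proper (which you do) and that an endpoint forces singularity (which you also note). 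The paper instead argues by contradiction with no separation theorem and no convexity of the joint range: it assumes the smallest eigenvalue $\lambda_t$ of $M_t=(1-t)M+tN$ is negative for all $t\in[0,1]$, and uses continuity of $\lambda_t$, upper semicontinuity of the eigenspaces $\Lambda_t$, and connectedness of the sphere $\Lambda_\tau$ at a critical parameter $\tau$ to produce a unit vector on which both forms are nonnegative yet $\bra{x}M_\tau\ket{x}<0$. Your route yields the reusable and conceptually important fact that the joint range of two real quadratic forms is convex, at the price of invoking hyperplane separation; the paper's route is more hands-on and topological, staying entirely within elementary spectral theory. Both are standard proofs of the $S$-lemma (both appear in the P\'olik--Terlaky survey the paper cites), and either one is acceptable here.
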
 

Lemma \ref{lemma:S-lemma-II} is an easy consequence of Lemma \ref{lemma:S-lemma} applied
with $M=F$ and $N=-G$.

\begin{proof} [Proof of Lemma \ref{lemma:S-lemma}] 
The implication $(ii) \Rightarrow (i)$ is straightforward. 
To show that $(i) \Rightarrow (ii)$, 
we argue by contradiction. 
Denote $M_t = (1-t)M+tN$ and assume that, for every $t \in [0,1]$, the smallest eigenvalue 
$\lambda_t$ of $M_t$ is strictly negative. Note that $t \mapsto \lambda_t$ is continuous. 
For $t\in [0,1]$, set 
\[
\Lambda_t := \{ x \in S^{n-1} : M_tx = \lambda_t x\} \neq \emptyset . 
\]
Then  $t \mapsto \Lambda_t$ upper semicontinuous in the sense that $t_n \to t$, 
$x_n \in \Lambda_{t_n}$ and $x_n \to x$ imply $x\in \Lambda_t$. 

Consider the sets $A = \{ x \in \R^n \st \bra{x} M \ket{x} \geq 0\}$ and 
$B = \{ x \in \R^n \st \bra{x} N \ket{x} \geq 0 \}$. We have $A \cup B = \R^n$ by hypothesis.
Since $M_0=M$, it follows that $\Lambda_0 \cap A=\emptyset$ and so  $\Lambda_0 \subset B$. 
Similarly, $\Lambda_1 \subset A$. 
Set 
\[
\tau = \sup \{t \in [0,1] : \Lambda_t \cap B \neq \emptyset \}. 
\]
We now note that $\Lambda_\tau  \cap B \neq \emptyset$; this is immediate if  $\tau=0$ and follows from 
upper semicontinuity of $t \mapsto \Lambda_t$ if $\tau >0$. 
For essentially the same reasons, $\Lambda_\tau  \cap A \neq \emptyset$. 

We now claim that $\Lambda_\tau  \cap A \cap B \neq \emptyset$. This is clear if the eigenvalue $\lambda_\tau$ 
is simple (note that all three sets, $\Lambda_\tau,  A$ and $B$, are symmetric by definition). On the other hand, 
if the multiplicity of 
$\lambda_t$ equals $k>1$, then $\Lambda_\tau$ is a $(k-1)$-dimensional sphere and hence is connected. 
Consequently, the closed nonempty sets $\Lambda_\tau  \cap A$ and  $\Lambda_\tau  \cap B$, the union 
of which is $\Lambda_\tau$,  
must have a nonempty intersection. 

To conclude the argument, choose $x \in \Lambda_\tau  \cap A \cap B \neq \emptyset$. 
Then, since $x \in  \Lambda_\tau$,
\[
\bra{x} M_\tau \ket{x} = \lambda_t < 0 .
\] 
On the other hand, since $x \in A \cap B$, 
\[
\bra{x} M_\tau \ket{x} = (1-\tau)\bra{x} M \ket{x} + \tau \bra{x} N \ket{x} \geq 0, 
\]
a contradiction.  
\end{proof}

\end{document}